\theoremstyle{plain}
\newtheorem{theorem}{Theorem}[section]
\newtheorem{example}{Example}
\newtheorem{remark}{Remark}
\newtheorem{definition}{Definition}
\newtheorem{lemma}[theorem]{Lemma}
 \numberwithin{equation}{section}
\newcommand{\n}[1]{\underline{#1}}
\newcommand{\EF}[1]{\operatorname{\mathcal{Z}}(#1)}
\newcommand{\F}[1]{\operatorname{\mathcal{L}}(#1)}
\newcommand{\minor}[3]{#1|#2/#3}
\newcommand{\ess}[1]{\operatorname{e}_{#1}}
\newcommand{\cl}[1]{\operatorname{cl}_{#1}}
\newcommand{\rank}[1]{\operatorname{r}_{#1}}
\newcommand{\Unif}[2]{\operatorname{U}_{#1,#2}}
\newcommand{\Z}{\mathbb Z}
\DeclareMathOperator{\x}{x}
\DeclareMathOperator{\y}{y}
\DeclareMathOperator{\Pot}{Pot}
\DeclareMathOperator{\Zxy}{\Z[\x,\y]}
\DeclareMathOperator{\Zx}{\Z[\x]}
\DeclareMathOperator{\Zy}{\Z[\y]}
\newcommand{\Aut}{\operatorname{Aut}}
\newcommand{\EM}{\mathcal E(M)}
\newcommand{\rgpMxy}[3]{\operatorname{S}(#1;#2, #3)}
\newcommand{\tuttexy}[3]{\operatorname{T}(#1;#2, #3)}
\newcommand{\rgpM}[1]{\rgpMxy{#1}{\x}{\y}}
\newcommand{\tutte}[1]{\tuttexy{#1}{\x}{\y}}
\newcommand{\cloud}[2]{\operatorname{c}(#1, #2; \x)}
\newcommand{\flock}[2]{\operatorname{f}(#1, #2; \y)}
\newcommand{\cloudC}[3]{\operatorname{\underline{c}}(#1, #2, #3; \x)}
\newcommand{\flockC}[3]{\operatorname{\underline{f}}(#1, #2, #3; \y)}
\newcommand{\bx}[2]{\operatorname{c}_{#2,#1}(\x)}
\newcommand{\by}[2]{\operatorname{f}_{#2,#1}(\y)}
\DeclareMathOperator{\dx}{d_{\x}}
\DeclareMathOperator{\dy}{d_{\y}}
\newcommand{\we}{\operatorname{A}}
\title[Cyclic Flats and the \textsc{Tutte} Polynomial]{Computing the \textsc{Tutte} Polynomial of a Matroid from its Lattice of Cyclic Flats}
\author[Jens Niklas Eberhardt]{Jens Niklas Eberhardt}
\address{%
RWTH Aaachen\\
 Templergraben 64\\
52062 Aachen\\
Germany}
\email{jens.eberhardt@rwth-aachen.de}
\subjclass{Primary 05B35; Secondary 00A00}
\keywords{Matroid theory, Tutte polynomial, rank generating polynomial, cyclic flats}
\date{July 7, 2013}
\begin{document}

\maketitle

\begin{abstract}
We show how the \textsc{Tutte} polynomial of a matroid $M$ can be computed from its \emph{condensed configuration}, which is a statistic of its lattice of cyclic flats. The results imply that the \textsc{Tutte} polynomial of $M$ is already determined by the abstract lattice of its cyclic flats together with their cardinalities and ranks. They furthermore generalize a similiar statement for perfect matroid designs due to Mphako \cite{PMD} and help to understand families of matroids with identical \textsc{Tutte} polynomial as constructed in \cite{LargeFamTutte}.
\end{abstract}
\section{Introduction}
The \textsc{Tutte} polynomial is a central invariant in matroid theory. But passing over from a matroid $M$ to its \textsc{Tutte} polynomial $\tutte{M}$ generally means a big loss of information. This paper gives one explanation for this phenomenon by showing how little information about the \textit{cyclic flats} of a matroid is really needed for the computation of its \textsc{Tutte} polynomial.

From now on let $M$ be a matroid. A flat $X$ in $M$ is called \emph{cyclic} if $M|X$ contains no coloops. Section \ref{secBackground} will recapitulate some basic facts about cyclic flats and show how the \textsc{Tutte} polynomial can be expressed in terms of \emph{cloud} and \emph{flock polynomials} of cyclic flats as introduced by Plesken in \cite{Plesken}. Then Section \ref{secIdentities} establishes some important identities for cloud and flock polynomials needed later on.

$\EF{M}$, the set of cyclic flats of $M$, is a lattice w.r.t. inclusion (c.f. Figure \ref{figConf}). In Section \ref{secConfiguration} we introduce the \emph{configuration} of $M$: the abstract lattice of its cyclic flats\footnote{By this we formally mean the isomorphism class of the lattice $(\EF{M}, \subseteq).$} together with their cardinalities and ranks. We then prove:
\newtheorem*{thmConfTutte}{Theorem \textbf{\ref{thmConfTutte}}}
\begin{thmConfTutte}
 The \textsc{Tutte} polynomial of a matroid is determined by its configuration.
\end{thmConfTutte}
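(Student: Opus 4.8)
The plan is to push the whole computation through the expansion of the rank generating polynomial in cloud and flock polynomials established in Section~\ref{secBackground}, and then to verify that every polynomial occurring in that expansion is already an invariant of the configuration. Recall that $\tutte{M}$ is obtained from $\rgpM{M}$ by the substitution $\x\mapsto\x-1$, $\y\mapsto\y-1$, and that Section~\ref{secBackground} writes
\begin{equation*}
 \rgpM{M}=\sum_{Z\in\EF{M}}\cloud{Z}{M}\cdot\flock{Z}{M}.
\end{equation*}
The index set of this sum is literally the abstract lattice, so it suffices to show that each factor $\cloud{Z}{M}$ and $\flock{Z}{M}$ is recoverable from the configuration. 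I first peel off the loops and coloops, whose multiplicities the configuration accounts for through the least cyclic flat and the recorded total rank and cardinality of $M$; after this reduction I may assume $\hat0=\emptyset$ and $\hat1=E$ are the least and greatest cyclic flats.

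The decisive observation is that the flock polynomial $\flock{Z}{M}$ depends only on the restriction $M|Z$ and the cloud polynomial $\cloud{Z}{M}$ only on the contraction $M/Z$. Because $Z$ is a cyclic flat, the cyclic flats of $M|Z$ are exactly those of $M$ lying in the lower interval $[\,\hat0,Z\,]$ and the cyclic flats of $M/Z$ exactly those in the upper interval $[\,Z,\hat1\,]$, with ranks and cardinalities inherited; so each factor attached to $Z$ is governed by an honest sub-configuration. I then extract the factors by a recursion internal to the lattice: grouping the subsets of $Z$ first by their closure and then by the cyclic core of that closure yields, via the identities of Section~\ref{secIdentities}, a relation of the shape
\begin{equation*}
 (1+\y)^{|Z|}=\y^{r(Z)}\,\flock{Z}{M}+\sum_{W<Z}\lambda_{W,Z}(\y)\,\flock{W}{M},
\end{equation*}
whose coefficients $\lambda_{W,Z}$ enumerate the flats with cyclic core $W$ contained in $Z$ and are therefore of the same combinatorial nature as the cloud polynomials. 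Solving this downward from $\hat0$, together with the dual relations that run upward from $\hat1$ for the cloud polynomials, determines all the factors from the numerical labels alone.

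The step I expect to be the genuine obstacle is exactly this passage from the ground-set definitions of $\cloud{Z}{M}$ and $\flock{Z}{M}$ to intrinsic, label-only descriptions. A priori both polynomials are sums over subsets of the concrete ground set of $M$, and nothing formal forbids their dependence on incidence data that the abstract configuration forgets; the real work is supplying the identities of Section~\ref{secIdentities} that collapse these subset sums, after the inclusion--exclusion over cyclic flats, into expressions in cardinalities, ranks, and containments alone. This is also the point at which the coefficients $\lambda_{W,Z}$ must be shown to be configuration-determined, so that the mutually entangled flock and cloud recursions actually close. Granting this, the theorem follows at once: feeding the configuration-determined factors back into the Section~\ref{secBackground} expansion and applying $\x\mapsto\x-1$, $\y\mapsto\y-1$ exhibits $\tutte{M}$ as a function of the configuration alone.
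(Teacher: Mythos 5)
Your proposal is correct and follows essentially the same route as the paper: expand $\rgpM{M}$ by the cloud/flock formula, reduce each factor to a minor whose configuration is an interval of the lattice (Remark \ref{remMinor}, Lemma \ref{lemmaCFminor}), and recover the remaining unknowns from the binomial specialization $\rgpMxy{M}{\y^{-1}}{\y}=\y^{-r}(1+\y)^{n}$ --- your displayed relation for $\flock{M}{Z}$ is exactly the paper's Lemma \ref{lemmaCFESGS} applied to the restriction $M|Z$. The ``genuine obstacle'' you flag does close as you hope: each $\lambda_{W,Z}$ is a reparametrized cloud polynomial of $W$ in $M|Z$, hence attached to the strictly smaller interval $[W,Z]$, so the entangled cloud/flock recursion is well-founded by induction on interval size, with two-element intervals (uniform minors, where the polynomials are explicit binomial sums) as the base case.
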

\begin{figure}[t]
\begin{minipage}{0.3\textwidth}
\begin{center}
\begin{tikzpicture}
  \coordinate[label=above left:$b$] (A) at (1,1.5);
  \coordinate[label=above left:$c$] (B) at (2,2);
  \coordinate[label=above left:$a$](C) at (0,1);
  \coordinate[label=below left:$d$] (D) at (1,0.5);
  \coordinate[label=below:$e$]  (E) at (2,0);
  \coordinate[label=below right:$f$] (F) at (2.1,1.1);
  \draw (A) -- (B) -- (C) 
  (C) --  (D) --  (E) ;
  \fill (A) circle (3pt) (B) circle (3pt) (C) circle (3pt) (D) circle (3pt) (E) circle (3pt) (F) circle (3pt) ;
\end{tikzpicture} \\
$M_1$\\\vspace{7pt}
\begin{tikzpicture}
  \coordinate[label=left:$a\dots f$] (B) at (1,3);
  \coordinate[label=left:$abc$](A) at (0,2);
  \coordinate[label=left:$\emptyset$] (D) at (1,1);
  \coordinate[label=left:$ade$]  (E) at (2,2);
  \draw (A) -- (B) --  (E)  -- (D) -- (A);
  \fill (A) circle (3pt) (B) circle (3pt) (D) circle (3pt) (E) circle (3pt)  ;
\end{tikzpicture}
\\
$\EF{M_1}$
\end{center}
\end{minipage}
\begin{minipage}{0.3\textwidth}
\begin{center}
\begin{tikzpicture}
  \coordinate[label=above left:$b$] (B) at (1,1.5);
  \coordinate[label=above left:$c$] (C) at (2,2);
  \coordinate[label=above left:$a$](A) at (0,1);
  \coordinate[label=below right:$d$] (D) at (1,0.5);
  \coordinate[label=below right:$e$]  (E) at (2,1);
  \coordinate[label=below right:$f$] (F) at (3,1.5);
  \draw (A) -- (B) -- (C) 
  (D) --  (E) --  (F) ;
  \fill (A) circle (3pt) (B) circle (3pt) (C) circle (3pt) (D) circle (3pt) (E) circle (3pt) (F) circle (3pt) ;
\end{tikzpicture}\\\vspace{10pt}
$M_2$\\\vspace{7pt}
\begin{tikzpicture}
  \coordinate[label=left:$a\dots f$] (B) at (1,3);
  \coordinate[label=left:$abc$](A) at (0,2);
  \coordinate[label=left:$\emptyset$] (D) at (1,1);
  \coordinate[label=left:$def$]  (E) at (2,2);
  \draw (A) -- (B) --  (E)  -- (D) -- (A);
  \fill (A) circle (3pt) (B) circle (3pt) (D) circle (3pt) (E) circle (3pt)  ;
\end{tikzpicture}\\
$\EF{M_2}$
\end{center}
\end{minipage}
\begin{minipage}{0.3\textwidth}
\vspace{0pt}\hspace{10pt}
\begin{tikzpicture}
  \coordinate[label=above:{$(6,\, 3)$}] (B) at (1,3);
  \coordinate[label=left:{$(3,\, 2)$}](A) at (0,2);
  \coordinate[label=below:{$(0,\, 0)$}] (D) at (1,1);
  \coordinate[label=right:{$(3,\, 2)$}]  (E) at (2,2);
  \draw (A) -- (B) --  (E)  -- (D) -- (A);
  \fill (A) circle (3pt) (B) circle (3pt) (D) circle (3pt) (E) circle (3pt)  ;
\end{tikzpicture} 
\end{minipage}
\label{figConf}
\caption{Left: Two non isomorphic matroids and their lattices of cyclic flats. Right: Their configuration (the labels are $(|X|, \rank{M_i}(X))$ for $X\in \EF{M_i}$).}
\end{figure}
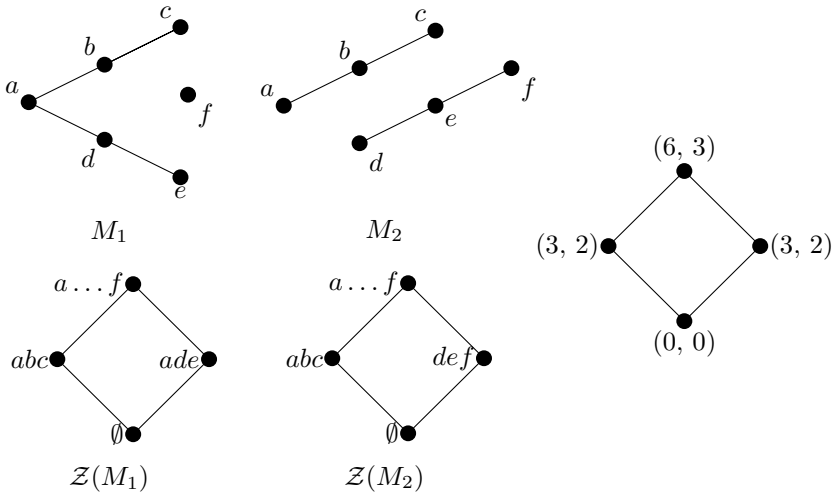
While $M$ is determined by its cyclic flats and their ranks (c.f. \cite{Bonin}), it generally is far from being determined by its configuration (c.f. Figure \ref{figConf}); there are even superexponential families of matroids with identical configurations (c.f. \cite{LargeFamTutte}).
So Theorem \ref{thmConfTutte} explains one big part of the information lost when passing from $M$ to its \textsc{Tutte} polynomial.

In Section \ref{secCondensedConfiguration} we incorporate symmetries in $M$ to shrink down the information needed for its  \textsc{Tutte} polynomial even more. Let $G\leq\Aut(M)$, $P$ be the set of $G$-orbits of $\EF{M}$ and $\{R_B\}_{B\in P}$ a system of representatives.
The \emph{condensed configuration} of $M$ corresponding to $P$ consists of the cardinalities and ranks of the $R_B$ and the matrix\footnote{This \emph{generalized adjacency matrix} was introduced by Plesken in \cite{Plesken2}; for $G=\{1\}$ it is simply the adjacency matrix of the lattice $\EF{M}$.} $(\we_P(B,C))_{B,C\in P}$ where
$$\we_P(B,C):=|\{X\in B| B\subseteq R_C\}|.$$

After discussing some examples, e.g.  a condensed configuration for the \textsc{Golay} code matroid, we will prove:
\newtheorem*{thmCondConfTutte}{Theorem \textbf{\ref{thmCondConfTutte}}}
\begin{thmCondConfTutte}
 The \textsc{Tutte} polynomial of $M$ is determined by a condensed configuration of $M$.
\end{thmCondConfTutte}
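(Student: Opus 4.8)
The plan is to deduce Theorem \ref{thmCondConfTutte} from Theorem \ref{thmConfTutte} by showing that the condensed configuration already determines exactly those quantities that the proof of Theorem \ref{thmConfTutte} extracts from the full configuration. Recall that that proof computes $\tutte{M}$ as a combination, indexed by the cyclic flats $X \in \EF{M}$, of the cloud polynomial $c(X)$ and the flock polynomial $f(X)$, and that by the identities of Section \ref{secIdentities} each of $c(X)$ and $f(X)$ arises from a recursion whose only ingredients are the cardinalities and ranks of cyclic flats together with the containment relation among them. The first observation is that all of these ingredients are $G$-invariant: since $G \leq \Aut(M)$, every $g \in G$ is a matroid automorphism, hence induces a rank- and cardinality-preserving automorphism of the lattice $(\EF{M}, \subseteq)$ and carries the combinatorial data defining $c(X)$ and $f(X)$ bijectively onto that of $c(gX)$ and $f(gX)$. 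Consequently $c(X)$ and $f(X)$ depend only on the orbit $B \in P$ of $X$, and I may write $c_B := c(R_B)$ and $f_B := f(R_B)$.

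Next I would run the cloud recursion at the level of orbits. That recursion expresses $c(R_C)$ as a sum over cyclic flats $X \subseteq R_C$ of a term $c(X)$ weighted by a factor which, by the identities of Section \ref{secIdentities}, depends only on $(\rank{M}(R_C), |R_C|)$ and $(\rank{M}(X), |X|)$. Grouping the summation index $X$ by its orbit $B$, the number of $X \in B$ with $X \subseteq R_C$ is precisely $\we_P(B,C)$, and each such $X$ contributes the same value $c_B$ times the same orbit-invariant factor. Hence
\[
  (\text{left-hand side for } R_C) \;=\; \sum_{B \in P} \we_P(B,C)\, c_B \cdot (\text{factor depending only on the ranks and cardinalities of } R_B, R_C).
\]
Since the left-hand side and all the factors are determined by the representatives' ranks and cardinalities, this is a linear system for the unknowns $\{c_B\}_{B \in P}$ that is triangular with respect to $\rank{M}(R_B)$, and can therefore be solved using only the condensed configuration.

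The flock polynomials are handled symmetrically, the one new point being that the flock recursion for $f(R_C)$ sums over cyclic flats lying \emph{above} $R_C$, whereas $\we_P$ records containments \emph{below} representatives. I would close this gap by a double count: the number of pairs $(X,Y)$ with $X \in B$, $Y \in C$ and $X \subseteq Y$ equals both $|C|\,\we_P(B,C)$ (counting by $Y$, using transitivity of $G$ on $C$) and $|B|\cdot|\{Y \in C : R_B \subseteq Y\}|$ (counting by $X$), so that
\[
  |\{Y \in C : R_B \subseteq Y\}| \;=\; \frac{|C|\,\we_P(B,C)}{|B|}.
\]
The orbit sizes are themselves read off from $\we_P$: writing $C_\top=\{\hat 1\}$ for the singleton orbit of the unique maximal cyclic flat (fixed by $G$), every cyclic flat lies below $\hat 1$, so $\we_P(B,C_\top)=|B|$. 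Thus all upward incidences and all orbit sizes are recoverable from $\we_P$, and the flock recursion can be solved for $\{f_B\}$ exactly as for the clouds.

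Finally I would reassemble the Tutte polynomial by regrouping the formula of Theorem \ref{thmConfTutte} over orbits: each orbit $B$ contributes $|B|$ copies of the common term built from $c_B$ and $f_B$, yielding $\tutte{M}$ as an explicit expression in the condensed data. The step I expect to be the main obstacle is verifying that the cloud and flock recursions really are \emph{flat} in the required sense, that is, that their weighting factors depend only on the pairwise rank/cardinality data of the two flats involved and not on finer features of the interval between them. If a recursion secretly depended on the internal lattice structure of an interval such as $[X, R_C]$, the orbit multiplicities $\we_P(B,C)$ alone would no longer suffice. Confirming that the identities of Section \ref{secIdentities} have precisely this pairwise form, so that the regrouping by $G$-orbits is lossless, is therefore the crux of the argument.
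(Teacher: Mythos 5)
There is a genuine gap, and it sits exactly at the point you yourself flag as the crux. Your plan needs the cloud recursion to be ``flat'': $\cloud{M}{R_C}$ should be a sum over flats $X$ weighted by factors depending only on the pairwise rank/cardinality data. But the recursion of Section \ref{secIdentities} is not of this form. By Lemmas \ref{lemmaCFminor} and \ref{lemmaCFESGS}, computing $\cloud{M}{X}=\cloud{M/X}{\emptyset}$ requires the products $\cloud{M}{Z}\cdot\flock{\minor{M}{Z}{X}}{Z-X}$ for all intermediate cyclic flats $X\subsetneq Z\subsetneq \EM$, and the factor $\flock{\minor{M}{Z}{X}}{Z-X}$ depends on the entire interval $[X,Z]_{\EF{M}}$, not just on the ranks and cardinalities of $X$ and $Z$. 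Even when $P$ consists of $G$-orbits this factor is an invariant only of the $G$-orbit of the \emph{pair} $(X,Z)$, whereas the condensed configuration records only the counts $\we_P(B,C)$ and not the orbits of incident pairs; two incident pairs $(X,Z)$, $(X',Z')$ with $X,X'\in B$ and $Z,Z'\in C$ can yield non-isomorphic minors. So the regrouping by orbits is not lossless, and the ``triangular linear system'' (which is in any case not linear, since the recursion couples clouds and flocks multiplicatively) cannot be assembled from the condensed data. A secondary issue: the paper proves the theorem for arbitrary condensations $P$, for which $\cloud{M}{X}$ need not even be constant on blocks, so your opening observation already restricts the scope to group-orbit condensations.

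The paper's proof circumvents exactly this obstacle by never computing individual or orbit-wise cloud and flock polynomials. Instead it introduces the pair-indexed \emph{averages} $\cloudC{P}{B}{C}=\sum_{X\in B,\,X\subseteq R_C}\cloud{M|R_C}{X}$ and $\flockC{P}{B}{C}=\sum_{X\in B,\,X\subseteq R_C}\flock{M/X}{R_C-X}$, which satisfy a closed recursion (Definition \ref{definitionCFCondensation}) whose only ingredients are the matrix $(\we_P(B,C))$ and the ranks and cardinalities of the representatives. The content of Lemma \ref{lemmaCFCondensation} is that the triple sum over incident pairs from two blocks can be rearranged so that the interval-dependent factors only ever appear already summed over a block, where independence of the representative holds by induction; Lemma \ref{lemmaAlg2} then reassembles $\rgpM{M}$ from these averages. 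To repair your argument you would have to replace the orbit-constant unknowns $c_B$, $f_B$ by such averages over incident pairs, at which point you have essentially reconstructed the paper's proof.
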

Section \ref{secPMD} shows how to obtain a condensed configuration of a perfect matroid design using only the cardinalities of flats of given rank. Together with Theorem \ref{thmCondConfTutte} this yields a new proof for Mphako's results about the \textsc{Tutte} polynomial of perfect matroid designs in \cite{PMD}.
\subsection*{Acknowledgments}
I would like to thank Wilhelm Plesken for many helpful discussions and Joseph E. Bonin for providing useful literature.
\section{Background} \label{secBackground}
We quickly recapitulate the most important facts about cyclic flats and the cloud/flock formula for the rank generating polynomial from \cite{Plesken}, while assuming familiarity with the basics of matroid theory.
From now on let $M$ be a matroid without loops and coloops with rank funktion $\rank{M}$, closure operator $\cl{M}$ and ground set $\EM$.
\subsection{Cyclic flats}
A flat $X$ in $M$ is called a \emph{cyclic flat} if $M|X$, the restriction of $M$ to $X$, contains no coloops. We denote the set of (cyclic) flats by $\F{M}$, resp. $\EF{M}$; both form a lattice w.r.t. inclusion.
\begin{example} \label{expUniformCF} Let $r<n$ and consider $\Unif{r}{n}$, the uniform matroid of rank $r$ on $n$ points. Then $\F{\Unif{r}{n}}=\Pot_{<r}(\n{n}) \cup \{\n{n}\}$ and $\EF{\Unif{r}{n}}=\{\emptyset ,\n{n}\}$. If vice versa $|\EF{M}|=2$, then $M$ is a uniform matroid.
\end{example}
An arbitrary flat in $M$ contains a unique maximal cyclic flat $\ess{M}(X)$, obtained by removing the coloops in $M|X$ from $X$. This induces a surjective lattice homomorphism$$\ess{M}: \F{M} \rightarrow\EF{M}.$$

Like the lattice of flats, the lattice of cyclic flats behaves well w.r.t. restriction and contraction:

\begin{remark}\label{remMinor} Let $X\subseteq Y$ be cyclic flats in $M$, then $$[X,Y]_{\EF{M}} \rightarrow \EF{M|Y/X},\, Z\mapsto Z-X$$ is an isomorphism of lattices and furthermore $$\rank{M|Y/X}(Z-X)=\rank{M}(Z)-\rank{M}(X).$$
\end{remark}
\subsection{Cloud/flock formula for the rank generating polynomial}
Instead of the \textsc{Tutte} polynomial $\tutte{M}$ of $M$ we will from now on study its \emph{rank generating polynomial} $\rgpM{M}$ which is $$\rgpM{M}=\sum_{X\subseteq\EM} \x^{\rank{M}(E)-\rank{M}(X)}\y^{|X|-\rank{M}(X)}.$$
\textsc{Tutte} and rank generating polynomials can be transformed into each other since $\tutte{M}=\rgpMxy{M}{\x-1}{\y-1}$.

The summands of the rank generating polynomial of $M$ can be conveniently resorted using the cloud and flock polynomials of its cyclic flats introduced in \cite{Plesken}\footnote{We note that the cloud/flock formula for the rank generating polynomial looks very similiar to a formula for the \textsc{Tutte} polynomial in \cite{TutteIdent} but is in fact not equivalent.}:
\begin{definition}
Let $X$ be a cyclic flat in $M$. The \emph{cloud}, resp. \emph{flock} \emph{polynomial} of $X$ in $M$ is defined by \begin{align*}  \cloud{M}{X}&:= \sum_{\mathclap{Y\in \ess{M}^{-1}(\{X\})}}\x^{\rank{M}(M)-\rank{M}(Y)}\text{ , resp.}\\   \flock{M}{X}&:=\sum_{\mathclap{Y \in \cl{M}^{-1}(\{X\})}} \y^{|Y|-\rank{M}(Y)}
\end{align*}
\end{definition}
\begin{theorem}[Cloud/flock formula\cite{Plesken}]
$$\rgpM{M}=\sum_{X\in \EF{M}} \cloud{M}{X} \flock{M}{X}.$$
\end{theorem}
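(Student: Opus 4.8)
The plan is to resort the defining sum of $\rgpM{M}$ in two stages, mediated by the closure operator and by the coloop-removal map $\ess{M}$. First I would partition the subsets $Y\subseteq\EM$ according to their closure $\cl{M}(Y)$. Since $\rank{M}(Y)=\rank{M}(\cl{M}(Y))$, the factor $\x^{\rank{M}(M)-\rank{M}(Y)}$ depends only on the flat $F=\cl{M}(Y)$, so
$$\rgpM{M}=\sum_{F\in\F{M}}\x^{\rank{M}(M)-\rank{M}(F)}\!\!\sum_{\mathclap{Y\in\cl{M}^{-1}(\{F\})}}\y^{|Y|-\rank{M}(Y)}.$$
This rewrites the rank generating polynomial as a sum over \emph{all} flats, with an inner sum attached to each flat $F$ that has exactly the shape of a flock polynomial, except that $F$ need not be cyclic.

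The key step, and the one I expect to be the main obstacle, is a lemma identifying this inner sum for an arbitrary flat with the genuine flock polynomial of its maximal cyclic flat:
$$\sum_{\mathclap{Y\in\cl{M}^{-1}(\{F\})}}\y^{|Y|-\rank{M}(Y)}=\flock{M}{\ess{M}(F)}\qquad\text{for every }F\in\F{M}.$$
To prove it I would set $X=\ess{M}(F)$ and let $C=F\setminus X$ be the set of coloops of $M|F$. Two facts drive the argument. First, every coloop lies in every spanning subset: if $\cl{M}(Y)=F$ but some $c\in C$ were missing from $Y$, then $Y\subseteq F\setminus c$ would give $F=\cl{M}(Y)\subseteq\cl{M}(F\setminus c)$, contradicting $c\notin\cl{M}(F\setminus c)$; hence $C\subseteq Y$ for each such $Y$. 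Second, since the coloops split off as a free direct summand one has $M|F\cong(M|X)\oplus\Unif{|C|}{|C|}$, so a subset $Y=Y'\sqcup C$ with $Y'\subseteq X$ satisfies $\cl{M}(Y)=F$ if and only if $\cl{M}(Y')=X$. This yields a bijection $Y\leftrightarrow Y'=Y\setminus C$ between the two index sets, and because the decomposition gives $\rank{M}(Y)=\rank{M}(Y')+|C|$ while $|Y|=|Y'|+|C|$, the nullity $|Y|-\rank{M}(Y)=|Y'|-\rank{M}(Y')$ is preserved. Summing over the bijection delivers the identity.

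Finally I would regroup the flats by their image under $\ess{M}$. Substituting the lemma and collecting all flats $F$ with $\ess{M}(F)=X$ gives
$$\rgpM{M}=\sum_{X\in\EF{M}}\flock{M}{X}\!\!\sum_{\mathclap{F\in\ess{M}^{-1}(\{X\})}}\!\!\x^{\rank{M}(M)-\rank{M}(F)}=\sum_{X\in\EF{M}}\cloud{M}{X}\,\flock{M}{X},$$
since the inner sum is precisely $\cloud{M}{X}$ by definition. The heart of the matter is the lemma; once the coloop/direct-sum structure of $M|F$ is in hand, both regroupings are purely formal rearrangements of a finite sum.
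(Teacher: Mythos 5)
Your proof is correct and follows essentially the same route as the paper's (which is only a sketch): partition the subsets of the ground set by the composite $\ess{M}\circ\cl{M}$, using that corank is constant on fibres of $\cl{M}$ and nullity on fibres of $\ess{M}$. Your coloop-splitting lemma identifying $\sum_{Y\in\cl{M}^{-1}(\{F\})}\y^{|Y|-\rank{M}(Y)}$ with $\flock{M}{\ess{M}(F)}$ is precisely the detail the paper's sketch leaves implicit, worked out correctly.
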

\begin{proof}[Proof (Sketch):]Notice that $\EM=\biguplus_{X\in\EF{M}}\cl{M}^{-1}(\ess{M}^{-1}(\{X\}))$ and that corank is constant on the fibers of $\cl{M}$ and nullity (cardinality minus rank) is constant on the fibres of $\ess{M}$.\end{proof}
\begin{example}
We again consider the uniform matroid $\Unif{r}{n}$ for $r<n$. Then $\cl{\Unif{r}{n}}^{-1}(\{\emptyset\})=\{\emptyset\}$ and $\ess{\Unif{r}{n}}^{-1}(\{\n{n}\})=\{\n{n}\}$, hence
$$\flock{\Unif{r}{n}}{\emptyset}=\cloud{\Unif{r}{n}}{\n{n}}=1.$$
Furthermore $\ess{\Unif{r}{n}}^{-1}(\{\emptyset\})=\Pot_{<r}(\n{n})$, hence
$$\cloud{\Unif{r}{n}}{\emptyset}=\sum_{0\leq i < r}  \binom {n} {i} \x^{r-i}.$$
Analogously $\cl{\Unif{r}{n}}^{-1}(\{\n{n}\})=\Pot_{\geq r}(\n{n})$ and
$$\flock{\Unif{r}{n}}{\n{n}}=\sum_{r\leq i\leq n}\binom {n} {i} \y^{i-r}.$$
\end{example}

\section{Identities for cloud and flock polynomials} \label{secIdentities}
We state some useful identities for cloud and flock polynomials and show that the cloud and flock polynomial of the empty set and the ground set are already determined by the cloud and flock polynomials of all other cyclic flats. This is crucial for the recursive algorithms introduced later on. 

The rank generating polynomial $\rgpM{M}$ of $M$ is per definitionem a sum over all subsets of $\EM$ and it is easy to see that for $n=|\EM|$ and $r=\rank{M}(M)$
\begin{align}\label{eqRGPbinom}
\rgpMxy{M}{\x}{\x^{-1}}=\sum_{i=0}^{n} \binom{n}{i} \x^{r-i}.
\end{align}
To make use of this identity we define the $\Z$-linear maps\footnote{Notice the subtle difference between the definitions of $\dx$ and $\dy$: $\dx$ cuts of the constant term, whereas $\dy$ does not.}
\begin{align*}
	\dx &: \Zxy \rightarrow \Zx,\, f \mapsto \sum_{1\leq i} a_i \x^i,\text{ for }  f(\x,\x^{-1})=\sum_{i} a_i \x^i\text{ and}\\
       \dy &: \Zxy \rightarrow \Zy,\,  f \mapsto \sum_{0\leq i} b_i \y^i,\text{ for }f(\y^{-1},\y)=\sum_{i} b_i \y^i.
\end{align*}
And furthermore a notation for the cloud and flock polynomials in the uniform matroid $\Unif{r}{n}$. For $r<n$:
 \begin{align*}
	\bx{n}{r}&:=\cloud{\Unif{r}{n}}{\varnothing}=\sum_{0\leq i < r}  \binom {n} {i} \x^{r-i} \text{ and} \label{eqCloudbinom}\\
	\by{n}{r}&:=\flock{\Unif{r}{n}}{\n{n}}=\sum_{r\leq i\leq n}\binom {n} {i} \y^{i-r}.
 \end{align*}
 and for $n=r=0:$
  \begin{align*}
	\bx{0}{0}&:=\by{0}{0}=1.
 \end{align*}

In this notation equation \eqref{eqRGPbinom} becomes
\begin{align}
\dx(\rgpM{M})&=\bx{n}{r} \text{ and}\\
\dy(\rgpM{M})&=\by{n}{r}.
\end{align}
Applying these identities to the cloud/flock formula of the rank generating polynomial we obtain
\begin{lemma}\label{lemmaCFESGS}
The cloud (flock) polynomial of the empty (ground) set only depends on the cloud and flock polynomials of the cyclic flats $X\not\in\{\emptyset, \EM\}$:
\begin{align*}
   \cloud{M}{\varnothing}&= \bx{n}{r} - \dx\bigl[\sum_{X}\cloud{M}{X}\flock{M}{X}\bigr] \text{ and}\\
   \flock{M}{E}&=\by{n}{r} - \dy\bigl[\sum_{X} \cloud{M}{X}\flock{M}{X}\bigr],
 \end{align*}
where $X\in \EF{M}-\{\emptyset, \EM\}$, $n$ is the number of points and $r$ the rank of $M$.
\end{lemma}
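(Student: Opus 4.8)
The plan is to apply the operators $\dx$ and $\dy$ to both sides of the cloud/flock formula and exploit their $\Z$-linearity to reduce everything to the behaviour of the two extreme terms indexed by $\emptyset$ and $\EM$. On the left-hand side I would invoke the already-established identities $\dx(\rgpM{M}) = \bx{n}{r}$ and $\dy(\rgpM{M}) = \by{n}{r}$. On the right-hand side, linearity gives $\dx\bigl[\sum_{X \in \EF{M}} \cloud{M}{X}\flock{M}{X}\bigr] = \sum_{X} \dx[\cloud{M}{X}\flock{M}{X}]$, and I would split this sum into the contribution of $\emptyset$, the contribution of $\EM$, and the remaining terms over $X \in \EF{M} - \{\emptyset, \EM\}$.

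The first step is to identify the two extreme terms. Since $M$ has no loops, $\cl{M}^{-1}(\{\emptyset\}) = \{\emptyset\}$, whence $\flock{M}{\emptyset} = 1$; dually, $\ess{M}^{-1}(\{\EM\}) = \{\EM\}$ forces $\cloud{M}{\EM} = 1$. Hence the $\emptyset$-term equals $\cloud{M}{\emptyset}$, a polynomial in $\x$ alone, and the $\EM$-term equals $\flock{M}{\EM}$, a polynomial in $\y$ alone.

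The crucial observation, and the step I expect to require the most care, is the support analysis of these two polynomials. I would argue that $\cloud{M}{\emptyset}$ has no constant term: its exponents are $r - \rank{M}(Y)$ for flats $Y$ with $\ess{M}(Y) = \emptyset$, and an exponent $0$ would force $\rank{M}(Y) = r$, i.e. $Y = \EM$, whose essential cyclic flat is $\EM \neq \emptyset$ (using that $M$ has no coloops). Thus $\cloud{M}{\emptyset}$ is a pure-$\x$ polynomial supported in strictly positive degrees, while $\flock{M}{\EM}$ is a pure-$\y$ polynomial in nonnegative degrees. From the definitions of $\dx$ and $\dy$ one then reads off the action on such monomials: substituting $\y = \x^{-1}$ leaves $\cloud{M}{\emptyset}$ unchanged (all its exponents stay positive) and turns $\flock{M}{\EM}$ into a Laurent polynomial in nonpositive powers of $\x$, so that $\dx$ retains the former untouched and annihilates the latter. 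Symmetrically, $\dy$ annihilates $\cloud{M}{\emptyset}$ (its positive $\x$-powers become negative $\y$-powers) and fixes $\flock{M}{\EM}$.

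Assembling these facts, applying $\dx$ to the cloud/flock formula yields $\bx{n}{r} = \cloud{M}{\emptyset} + 0 + \dx\bigl[\sum_{X} \cloud{M}{X}\flock{M}{X}\bigr]$ with $X$ ranging over $\EF{M} - \{\emptyset, \EM\}$, and rearranging gives the first identity. The second identity follows by the completely dual computation with $\dy$, using $\dy(\rgpM{M}) = \by{n}{r}$ together with $\dy(\cloud{M}{\emptyset}) = 0$ and $\dy(\flock{M}{\EM}) = \flock{M}{\EM}$.
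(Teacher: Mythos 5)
Your proposal is correct and follows essentially the same route as the paper: apply $\dx$ and $\dy$ to the cloud/flock formula, use $\Z$-linearity together with $\flock{M}{\emptyset}=\cloud{M}{\EM}=1$, and observe that $\dx$ fixes $\cloud{M}{\emptyset}$ (no constant term, since $M$ has no coloops) while annihilating $\flock{M}{\EM}$, and dually for $\dy$. Your support analysis of the two extreme terms is in fact slightly more explicit than the paper's, but the argument is the same.
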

\begin{proof}
 Equation \eqref{eqCloudbinom} yields
 $$\bx{n}{r}=\dx(\rgpM{M})=\dx\bigl[\,\,\sum_{\mathclap{X\in \EF{M}}}\,\,\cloud{M}{X}\flock{M}{X}\bigr].$$
Using the $\Z$-linearity of $\dx$ we solve for $\cloud{M}{\varnothing}$
$$
\dx(\cloud{M}{\varnothing}\flock{M}{\varnothing})= \bx{n}{r} -\dx\bigl[\,\,\sum_{\mathclap{\substack{X\in \EF{M}\\X\neq \emptyset}}}\,\,\cloud{M}{\emptyset}\flock{M}{X}\bigr].$$
Now firstly $\flock{M}{\varnothing}=\cloud{M}{\EM}=1$. Secondly $\dx(\cloud{M}{\varnothing})=\cloud{M}{\varnothing}$, since $M$ does not consist of coloops and hence $\cloud{M}{\varnothing}$ has no constant term. Furthermore $\dx(\flock{M}{\EM})=0$, since $\dx$ cuts of the constant term. This yields the first statement. The second one follows analogously.
\end{proof}

Another crucial fact is that the cloud (flock) polynomial of a cyclic flat $X$ only depends on $M/X$ ($M|X$) as the following lemma states.
\begin{lemma}\label{lemmaCFminor} Let $X$ be a cyclic flat in $M$. Then
\begin{align*}
  \cloud{M}{X}&=\cloud{M/X}{\emptyset}\text{ and}\\
  \flock{M}{X}&=\flock{M|X}{X}.
 \end{align*}
\end{lemma}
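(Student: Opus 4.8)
The plan is to prove the two identities separately, reading the cloud polynomial of $X$ as a contraction statement (it records coranks above $X$) and the flock polynomial as a restriction statement (it records nullities below $X$). I would dispatch the flock identity first, as it is essentially a matter of unwinding definitions. For any $Y\subseteq X$ we have $\cl{M|X}(Y)=\cl{M}(Y)\cap X=\cl{M}(Y)$, the second equality because $X$ is a flat and hence $\cl{M}(Y)\subseteq X$; likewise $\rank{M|X}(Y)=\rank{M}(Y)$. Thus the fibres $\cl{M|X}^{-1}(\{X\})$ and $\cl{M}^{-1}(\{X\})$ coincide, and the contributed monomial $\y^{|Y|-\rank{M}(Y)}$ is the same whether computed in $M$ or in $M|X$. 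Since $X$ is the ground set of $M|X$ and $M|X$ has no coloops, $X$ is a cyclic flat of $M|X$ and $\flock{M|X}{X}$ is defined; summing the identical terms gives $\flock{M}{X}=\flock{M|X}{X}$.

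For the cloud identity I would set up a bijection between the index sets $\ess{M}^{-1}(\{X\})$ and $\ess{M/X}^{-1}(\{\emptyset\})$. Every flat $Y$ with $\ess{M}(Y)=X$ contains $X$, so these $Y$ lie in the interval $[X,\EM]$ of $\F{M}$, which is order-isomorphic to $\F{M/X}$ via $Y\mapsto Y-X$ (and $M/X$ is loopless because $X$ is a flat, so that $\emptyset\in\EF{M/X}$ and the right-hand side is defined). The whole identity then reduces to the claim that, for a flat $Y\supseteq X$,
\[\ess{M}(Y)=X\iff \ess{M/X}(Y-X)=\emptyset.\]

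To prove this equivalence I would analyse the coloops of $L:=M|Y$ and show they are exactly the elements of $Y-X$ that are coloops of $L/X=(M/X)|(Y-X)$. No $e\in X$ is a coloop of $L$: since $X$ is a cyclic flat, $e\in\cl{M}(X-e)\subseteq\cl{M}(Y-e)$, so $e$ is not a coloop of $L$. For $e\in Y-X$, the contraction rank formula $\rank{L/X}(A)=\rank{L}(A\cup X)-\rank{L}(X)$ shows at once that $e$ is a coloop of $L$ if and only if it is a coloop of $L/X$. Hence $\ess{M}(Y)=X$, i.e.\ the coloops of $L$ are precisely $Y-X$, holds if and only if every element of $Y-X$ is a coloop of $(M/X)|(Y-X)$, i.e.\ if and only if that restriction is free, i.e.\ if and only if $\ess{M/X}(Y-X)=\emptyset$. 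The same rank formula gives $\rank{M/X}(M/X)-\rank{M/X}(Y-X)=\rank{M}(M)-\rank{M}(Y)$, so the two sums agree term by term.

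The rank bookkeeping and the flock identity are routine; the one genuinely delicate point, which I would treat with care, is the coloop equivalence — verifying that contracting the cyclic flat $X$ neither creates nor destroys coloops among the elements of $Y-X$, and that the cyclic-flat hypothesis on $X$ is precisely what forbids coloops inside $X$.
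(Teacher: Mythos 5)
Your proof is correct. The paper states Lemma \ref{lemmaCFminor} without any proof, so there is nothing to compare against; your verification — identifying the fibre $\cl{M}^{-1}(\{X\})$ with $\cl{M|X}^{-1}(\{X\})$ for the flock identity, and matching $\ess{M}^{-1}(\{X\})$ with $\ess{M/X}^{-1}(\{\emptyset\})$ via $Y\mapsto Y-X$ for the cloud identity — is the natural argument, and you correctly isolate and check the one nontrivial point, namely that the coloops of $M|Y$ all lie in $Y-X$ (because $X$ is cyclic) and coincide there with the coloops of $(M/X)|(Y-X)$ (by the contraction rank formula), together with the corank bookkeeping $\rank{M}(M)-\rank{M}(Y)=\rank{M/X}(M/X)-\rank{M/X}(Y-X)$.
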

\section{Configurations}\label{secConfiguration}
We introduce the \emph{configuration} of a matroid and show how to compute the rank generating polynomial only using the configuration.
\begin{definition}
The \emph{configuration} of $M$ consists of
\begin{enumerate}
\item the abstract lattice of cyclic flats  in $M$, i.e. the isomorphism class of the lattice $(\EF{M}, \subseteq).$, together with
\item the cardinalities and ranks of the cyclic flats.
\end{enumerate} 
\end{definition}
The configuration of a matroid describes \emph{how it is is build up from uniform matroids} as the following example illustrates:
\begin{example} \label{expConf}
1.) According to Example \ref{expUniformCF}, a matroid without coloops is uniform on $n$ points and of rank $r$ ($r<n$) iff its configuration is
\begin{center}
\begin{tikzpicture}
  \coordinate[label=left:{$(n,\, r)$}] (B) at (0,.5);
  \coordinate[label=left:{$(0,\, 0)$}] (D) at (0,0);
  \draw (B) -- (D) ;
  \fill (B) circle (3pt) (D) circle (3pt) ;
\end{tikzpicture} 
\end{center}
2.) According to Remark \ref{remMinor}, the configuration of the minor $\minor{M}{Y}{X}$ (for cyclic flats $X\subset Y$) corresponds to the interval $[X,Y]$ in the configuration of $M$. 

Consider for example the matroid $M_i$ ($i=1,2$) from Figure \ref{figConf} and let $X$ be one of the cyclic flats of rank 2. Then from the configuration of $M_i$ one can deduce that
$M|X\cong U_{2,3}$ and $M/X\cong U_{1,3}$ since their configurations are
 \begin{center}
   \begin{tikzpicture}
  \coordinate[label=left:{$(3,\, 2)$}] (B) at (0,.5);
  \coordinate[label=left:{$(0,\, 0)$}] (D) at (0,0);
  \draw (B) -- (D) ;
  \fill (B) circle (3pt) (D) circle (3pt) ;
\end{tikzpicture}  \hspace{30pt}\vspace{-20pt}
  \begin{tikzpicture}
  \coordinate[label=left:{$(3,\, 1)$}] (B) at (0,.5);
  \coordinate[label=left:{$(0,\, 0)$}] (D) at (0,0);
  \draw (B) -- (D) ;
  \fill (B) circle (3pt) (D) circle (3pt) ;
\end{tikzpicture}
  \vspace{-2pt}
 \end{center}
 \begin{center}
  \hspace{40 pt} and \hspace{40 pt}.
 \end{center}
\end{example}
The main motivation for introducing configurations is the following main theorem.
\begin{theorem}\label{thmConfTutte}
The rank generating polynomial of a matroid can be computed by its configuration.
\end{theorem}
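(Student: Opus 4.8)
The plan is to prove the slightly stronger statement that the configuration of $M$ determines \emph{every} cloud polynomial $\cloud{M}{X}$ and \emph{every} flock polynomial $\flock{M}{X}$, $X\in\EF{M}$; the cloud/flock formula then assembles these into $\rgpM{M}$. I would argue by induction on $|\EF{M}|$. The base case is $|\EF{M}|=2$: by Example \ref{expUniformCF} such an $M$ is a uniform matroid $\Unif{r}{n}$, and the pair $(n,r)$ can be read off the top label of the configuration, so all relevant polynomials are given by the explicit formulas $\cloud{M}{\EM}=\flock{M}{\emptyset}=1$, $\cloud{M}{\emptyset}=\bx{n}{r}$ and $\flock{M}{\EM}=\by{n}{r}$.

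For the inductive step the main engine is the passage to minors. For an interior cyclic flat $\emptyset\neq X\neq\EM$, Lemma \ref{lemmaCFminor} gives $\cloud{M}{X}=\cloud{M/X}{\emptyset}$ and $\flock{M}{X}=\flock{M|X}{X}$, so it suffices to understand the minors $M/X$ and $M|X$. Both are loopless and coloopless: $M|X$ is a restriction of $M$ to the cyclic flat $X$, while $M/X$ is loopless as the contraction of a flat and coloopless because Remark \ref{remMinor} (with $Y=\EM$) exhibits its ground set $\EM-X$ as a cyclic flat, namely the top of the interval. Thus the full apparatus applies to both minors. By Remark \ref{remMinor} their lattices of cyclic flats are the intervals $[\emptyset,X]$ and $[X,\EM]$, with cardinalities and ranks obtained by subtracting $|X|$ and $\rank{M}(X)$; hence the configurations of $M|X$ and $M/X$ are read off the configuration of $M$ as these labelled intervals. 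Since $X$ is interior, each interval omits one of $\emptyset,\EM$ and therefore has strictly fewer than $|\EF{M}|$ elements, so the induction hypothesis determines $\cloud{M/X}{\emptyset}$ and $\flock{M|X}{X}$, and with them $\cloud{M}{X}$ and $\flock{M}{X}$.

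It then remains to recover the two boundary polynomials $\cloud{M}{\emptyset}$ and $\flock{M}{\EM}$, for which the relevant interval is all of $\EF{M}$ and induction is unavailable. This is exactly the content of Lemma \ref{lemmaCFESGS}: reading $n=|\EM|$ and $r=\rank{M}(\EM)$ off the top label, one has $\cloud{M}{\emptyset}=\bx{n}{r}-\dx\bigl[\sum_X\cloud{M}{X}\flock{M}{X}\bigr]$ and $\flock{M}{\EM}=\by{n}{r}-\dy\bigl[\sum_X\cloud{M}{X}\flock{M}{X}\bigr]$, the sums running over interior $X$, all of whose terms were just computed. Together with $\cloud{M}{\EM}=\flock{M}{\emptyset}=1$ this determines every cloud and flock polynomial, closing the induction. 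I expect the only delicate point to be the bookkeeping that turns ``the configuration of the minor'' into a genuine invariant extracted from the configuration of $M$: one must check that the corank/nullity data defining $\cloud{M/X}{\emptyset}$ and $\flock{M|X}{X}$ depend only on the labelled interval furnished by Remark \ref{remMinor}, and confirm the loopless/coloopless hypotheses so that Lemma \ref{lemmaCFESGS} and the base case are legitimately available for the minors.
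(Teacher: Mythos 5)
Your proposal is correct and follows essentially the same route as the paper: Lemma \ref{lemmaCFminor} plus Remark \ref{remMinor} handle the interior cyclic flats by recursion on proper intervals of the configuration, and Lemma \ref{lemmaCFESGS} recovers $\cloud{M}{\emptyset}$ and $\flock{M}{\EM}$ from the rest. Your explicit base case ($|\EF{M}|=2$, uniform matroids) and the verification that the minors are loopless and coloopless are details the paper leaves implicit, but the argument is the same.
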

\begin{proof}
We show how to recursively compute the cloud and flock polynomials for all cyclic flats $X$ in $M$.\\
\emph{Case 1:} $X\not\in\{ \emptyset,\EM\}$. Using Lemma \ref{lemmaCFminor} we have
\begin{align*}
  \cloud{M}{X}&=\cloud{M/X}{\emptyset}\text{ and}\\
  \flock{M}{X}&=\flock{M|X}{X}.
 \end{align*}
Now the configurations of $M/X$ and $M|X$ correspond to proper intervals in the configuration of $M$ (c.f. Remark \ref{remMinor} and Example \ref{expConf}), hence these cloud and flock polynomials can be computed recursively.\\
\emph{Case 2:}  $X \in\{ \emptyset,\EM\}$. Firstly
$$\cloud{M}{\EM}=\flock{M}{\emptyset}=1.$$
 Then by Lemma \ref{lemmaCFESGS} we have
\begin{align*}
   \cloud{M}{\varnothing}&= \bx{n}{r} - \dx\bigl[\sum_{X}\cloud{M}{X}\flock{M}{X}\bigr] \text{ and}\\
   \flock{M}{E}&=\by{n}{r} - \dy\bigl[\sum_{X} \cloud{M}{X}\flock{M}{X}\bigr],
 \end{align*}
where $X\in \EF{M}-\{\emptyset, \EM\},\,n=|\EM|$ and $r=\rank{M}(\EM)$. So Case 2 reduces to Case 1 and the statement follows using the cloud/flock formula for the rank generating polynomial.
\end{proof}
Theorem \ref{thmConfTutte} yields a better understanding in matroids with identical rank generating polynomials. In \cite{LargeFamTutte} superexponential familes of matroids with identical rank generating polynomials are constructed; as it turns out they all have - by construction - the same configuration.
\section{Condensed configurations}\label{secCondensedConfiguration}
We show how to incorporate symmetries in $M$ to shrink down the information needed for the computation of the rank generating polynomial even more by introducing the \emph{condensed configuration}.

But first, we generalize the notion of the set of orbits of cyclic flats.
\begin{definition}
Let $P$ be a partition of $\EF{M}$. Then $P$ is called a \emph{condensation} of $\EF{M}$ if for all $B,C\in P$:
\begin{enumerate}
\item cardinality and rank are constant on $B$,
\item $\we_P(B,C):=|\{X \in B | X\subseteq Y\}|$ is independent of the choice of $Y\in C$. 
\end{enumerate}
For a condensation $P$ we set $B\leq_P C :\Leftrightarrow \we_P(B,C)>0$, for $B,C\in P$. Then $(P,\leq_P)$ becomes a lattice again.
\end{definition}
Notice that conditions 1. and 2. are automatically satisfied if $P$ is the set of $G$-orbits of $\EF{M}$ for $G\leq \Aut(M)$.
Another interesting example is to take $P$ as the set of equivalence classes of the relation $X\sim Y :\Leftrightarrow M|X\cong M|Y$. There can be many different condensations of $\EF{M}$; but since the  supremum of two configurations in the lattice of partitions of $\EF{M}$ (w.r.t. coarseness) is a configuration again, there is always a unique coarsest condensation.

By forgetting the concrete realization of a condensation $P$ as a partition of $\EF{M}$ we obtain something we want to call a \emph{condensed configuration} of $M$:
\begin{definition}
Let $P$ be a condensation of $\EF{M}$ and $\{R_B\}_{B\in P}$ a system of representatives.
Then the \emph{condensed configuration} (corresponding to P) of $M$ consists of
\begin{enumerate}
\item the matrix $(\we_P(B,C))_{B,C\in P}$ and
\item cardinality and rank of $R_B$, for all $B \in P$.
\end{enumerate}
\end{definition}
Especially if $M$ is highly symmetrical, a condensed configurations can give a really comprehensive overview of the arrangement of the cyclic flats in $M$ as the following examples illustrate:
\begin{example}
1.) If $P$ is the trivial partition $P=\{\{X\}|X\in \EF{M}\}$, then the condensed configuration corresponding to $P$ is the configuration of $M$, since $(\we_P(B,C))_{B,C\in P}$ is just the adjacency matrix of the lattice $(\EF{M},\subseteq)$.

2.) Consider the matroid $M=M_i$ ($i=1,2$) from Figure \ref{figConf} and denote the cyclic flats of rank 2 by $X_1$ and $X_2$. Then clearly $P=\{\{\emptyset\},\{X_1,\, X_2\},\linebreak[3] \{ \EM\}\}$ is a condensation of $\EF{M}$ and the corresponding condensed configuration can be represented by
\begin{align*}
 (\we_P(B,C))_{B,C\in P}&=\bordermatrix{%
        &\mbox{\small$(0, \,0)$}&\mbox{\small$(3, \,2)$}& \mbox{\small$(6, \,3)$} \cr
	  \mbox{\small$(0, \,0)$} &1 & 1 & 1\cr
	 \mbox{\small$(3, \,2)$} & 0 & 1 & 2\cr
	  \mbox{\small$(6, \,3)$}& 0 & 0 & 1\cr
},
\end{align*}
where each row and column is labeled by $(|X|,\, \rank{M}(X))$, $X\in B$.

3.) Let $M$ be the matroid induced by the extended binary \textsc{Golay} Code. Then $\Aut{M}$, the \textsc{Mathieu} group $M_{24}$, acts transitively on the cyclic flats of $M$ of given cardinality and rank and has six orbits on $\EF{M}$ altogether. The corresponding condensed configuration is
\begin{align*}
\bordermatrix{%
        &\mbox{\small$(0, \,0)$}&\mbox{\small$(8, \,7)$}& \mbox{\small$(12, \,10)$}& \mbox{\small$(12, \,11)$}& \mbox{\small$(16, \,11)$}& \mbox{\small$(24, \,12)$}\cr
	\mbox{\small$(0, \,0)$} &1       &1      &1      &1      &1      &1\cr
	\mbox{\small$(8, \,7)$} &0       &1      &3      &0      &30     &75\cr
	\mbox{\small$(12, \,10)$}&0       &0      &1      &0      &140    &35420\cr
           \mbox{\small$(12, \,11)$}&0       &0      &0      &1      &0      &2576\cr
           \mbox{\small$(16, \,11)$}&0       &0      &0      &0      &1      &759\cr
           \mbox{\small$(24, \,12)$}&0       &0      &0      &0      &0      &1\cr
}.
\end{align*}
From this we can for example read off that the cyclic flats of cardinality 12 and rank 11 are neither contained in nor contain any other non trivial cyclic flats in $M$. Using the axiom scheme to define a matroid by its cyclic flats introduced in \cite{Bonin}, we can safely remove this orbit of cyclic flats from $\EF{M}$ and obtain a new interesting matroid, which still has $M_{24}$ as automorphism group, but cannot be found ``in nature'' like $M$.
\end{example}
Again the definition of a condensed configuration is motivated by the following generalization of Theorem \ref{thmConfTutte}:
\begin{theorem}\label{thmCondConfTutte}
 The rank generating polynomial can be computed by a condensed configuration.
\end{theorem}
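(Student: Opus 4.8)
The plan is to run the recursion of Theorem \ref{thmConfTutte} on the block lattice $(P,\leq_P)$ instead of on $\EF M$ itself. Grouping the cloud/flock formula by blocks gives $\rgpM M=\sum_{C\in P}\sum_{X\in C}\cloud M X\,\flock M X$, so everything reduces to showing that, for $X$ in a block $C$, the polynomials $\cloud M X$ and $\flock M X$ depend only on $C$ and on the condensed configuration. Granting this, each inner sum collapses to $|C|\,\mathrm c_C\,\mathrm f_C$ for the common values $\mathrm c_C,\mathrm f_C$, and the block size $|C|=\we_P(C,\{\EM\})$ can be read from the last column of the weight matrix, since every cyclic flat lies below $\EM$. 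The rank generating polynomial is then assembled as $\sum_{C\in P}\we_P(C,\{\EM\})\,\mathrm c_C\,\mathrm f_C$, and it remains only to compute the representative cloud and flock polynomials by induction on $|\EF M|$, the base case being the uniform matroid of Example \ref{expUniformCF}, where $\rgpM M$ is read off directly from $\bx n r$ and $\by n r$.

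By Lemma \ref{lemmaCFminor} we have $\flock M X=\flock{M|X}{X}$ and $\cloud M X=\cloud{M/X}{\emptyset}$, so the two halves are controlled by the restriction $M|X$ and the contraction $M/X$, whose lattices of cyclic flats are the intervals $[\emptyset,X]$ and $[X,\EM]$ (Remark \ref{remMinor}). The flock half is the benign one. For a lower interval $[\emptyset,X]$ the partition induced by $P$ is again a condensation, and its weight at an element $Z\in C'\cap[\emptyset,X]$ is $|\{W\in B:W\subseteq Z\}|=\we_P(B,C)$ because $Z\subseteq X$ forces no extra constraint; hence the condensed configuration of $M|X$ is precisely the principal-ideal submatrix of $(\we_P(B,C))_{B,C}$ together with the inherited cardinalities and ranks. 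This data depends only on the block $C$ of $X$, so by the inductive hypothesis $\flock{M|X}{X}$ is determined by the condensed configuration and is constant on $C$; the remaining case $X=\EM$ is pinned down by the flock identity of Lemma \ref{lemmaCFESGS}, whose right-hand sum $\sum_X\cloud M X\,\flock M X$ over inner blocks is once more of the form $\sum_C\we_P(C,\{\EM\})\,\mathrm c_C\,\mathrm f_C$.

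The cloud half is the main obstacle, because the condensation axiom constrains only the downward incidence counts $\we_P(B,C)=|\{X\in B:X\subseteq Y\}|$, whereas $M/X$ lives on the upper interval $[X,\EM]$, whose induced partition need not inherit its weights from $(\we_P(B,C))$. I would resolve this by passing to the dual: since $\rgpMxy M\x\y=\rgpMxy{M^*}\y\x$ and duality complements cyclic flats while exchanging restriction with contraction, the cloud polynomial $\cloud M X$ becomes a flock polynomial of the lower interval $[\emptyset,\EM-X]$ of $M^*$, to which the benign argument of the previous paragraph applies. The substance is then to show that the condensed configuration of $(M^*,P^*)$, with $P^*=\{\EM-B:B\in P\}$, is computable from that of $(M,P)$: the cardinalities and ranks transform by $\rank{M^*}(\EM-X)=|\EM|-|X|-\rank{M}(\EM)+\rank{M}(X)$, and the weight matrix of $P^*$ records the upward counts $|\{X\in B:X\supseteq Y\}|$, which are recovered from $(\we_P(B,C))$ through the double-counting identity $|C|\,\we_P(B,C)=|B|\cdot|\{Y\in C:Y\supseteq X\}|$ — both sides counting the incident pairs in $B\times C$ — once the block sizes $|B|=\we_P(B,\{\EM\})$ are known. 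The delicate points, which complete the induction, are to verify that $P^*$ is genuinely a condensation of $\EF{M^*}$, i.e.\ that these upward counts are constant on blocks (automatic when $P$ is a system of $G$-orbits), and to reconcile the constant-term discrepancy between dual clouds and flocks at the extreme flats $\emptyset$ and $\EM$.
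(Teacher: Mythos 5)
Your architecture is genuinely different from the paper's, and it founders at exactly the point you yourself call ``the main obstacle''. The definition of a condensation constrains only the \emph{downward} counts $\we_P(B,C)=|\{X\in B \mid X\subseteq Y\}|$, so the induced partition on a lower interval $[\emptyset,X]$ is indeed a condensation with inherited weights; but nothing in the axioms forces the upward counts $|\{Y\in C\mid Y\supseteq X\}|$ to be constant on $B$, and your double-counting identity $|C|\,\we_P(B,C)=|B|\cdot|\{Y\in C\mid Y\supseteq X\}|$ is only valid under that very constancy assumption, so it cannot be used to establish it. Consequently $P^*$ need not be a condensation of $\EF{M^*}$ for an arbitrary condensation $P$ (and the theorem is stated for arbitrary ones, not just $G$-orbit partitions), the dualization step is unavailable, and the induction collapses; the flock half falls with it, since pinning down $\flock{M|X}{X}$ via Lemma \ref{lemmaCFESGS} requires the cloud polynomials of the intermediate cyclic flats of $M|X$, which is the same upper-interval problem one level down. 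On top of this you would need duality formulas relating $\cloud{M}{X}$ to a flock polynomial of $M^*$, plus the constant-term bookkeeping for $\dx$ versus $\dy$ at the extreme flats --- machinery not developed in the paper, which you flag but do not supply.

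The paper's proof is engineered to avoid all of this: it never claims that individual cloud or flock polynomials are constant on blocks or computable. Instead it recursively defines \emph{average} polynomials $\cloudC{P}{B}{C}$ and $\flockC{P}{B}{C}$ from the weight matrix and the cardinality/rank data alone (Definition \ref{definitionCFCondensation}) and proves in Lemma \ref{lemmaCFCondensation} that they equal the block sums $\sum_{X\in B,\,X\subseteq R_C}\cloud{M|R_C}{X}$ and $\sum_{X\in B,\,X\subseteq R_C}\flock{M/X}{R_C-X}$; the asymmetric indexing --- the whole block below, a single representative above --- is precisely what matches the downward-count axiom, and independence of the choice of $R_C$ emerges as a corollary of the recursion rather than being assumed. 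Lemma \ref{lemmaAlg2} then assembles $\rgpM{M}$ from these sums. To salvage your route you would have to either restrict to $G$-orbit condensations (where constancy on blocks is automatic) and still prove the duality lemmas, or show that the condensation axioms force the upward counts to be constant --- a claim you have not established and which the paper's argument deliberately never needs.
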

Recall that in the proof of Theorem \ref{thmConfTutte} we actually showed how to compute the cloud and flock polynomials in all minors $M|X/Y$ for cyclic flats $X\subset Y$. Generally, this is not possible in the case of condensed configurations, for the simple reason that from a condensed configuration we cannot derive exactly which minors appear in $M$.
But we will be able to compute \emph{average} cloud and flock polynomials.

So let $P$ be a condensation of $M$, $\{R_B\}_{B\in P}$ a system of representatives. To prove Theorem \ref{thmCondConfTutte}, we show how to compute the rank generating polynomial of $M$ only using the condensed configuration corresponding to $P$, i.e. the matrix $(\we_P(B,C))_{B,C\in P}$ and the cardinalities and ranks of the $R_B$, for $B \in P$.
\begin{definition}\label{definitionCFCondensation}
For $B,C\in P$ with $B\leq_P C$ we recursively define
\begin{align*}
  \cloudC{P}{B}{C}&:=\we_P(B,C)\bx{n}{r}-\dx(S(B,C)) \text{ and}\\
  \flockC{P}{B}{C}&:=\we_P(B,C)\by{n}{r}-\dy(S(B,C)), \text{ for}\\
  S(B,C)&:=\sum_{D} \cloudC{P}{D}{C}\flockC{P}{B}{D}
\end{align*}
 where $D$ ranges over $[B,C]_P - \{B,C\}$, $n:=|R_C|-|R_B|$ and $r:=\rank{M}(R_C)-\rank{M}(R_B)$.
\end{definition}
\begin{lemma}\label{lemmaCFCondensation}
In the notation of Definition \ref{definitionCFCondensation}:
\begin{align*}
  \cloudC{L}{B}{C}&=\sum_{\mathclap{\substack{X\in B\\X \subseteq R_C}}} \cloud{M|R_C}{X} \text{ and}\\
  \flockC{L}{B}{C}&=\sum_{\mathclap{\substack{X\in B\\X \subseteq R_C}}} \flock{M/X}{R_C-X}.
 \end{align*}
 Notice that this implies that the right hand sides are independent of the choice of $R_C \in C$, since the left hand sides are.
\end{lemma}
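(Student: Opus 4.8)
The plan is to prove both identities simultaneously by induction on the interval $[B,C]_P$, running the induction over \emph{all} systems of representatives at once, so that the induction hypothesis for a subinterval may be invoked with any representative we like. The base case $B=C$ is immediate: since cardinality is constant on $B$, the only $X\in B$ with $X\subseteq R_B$ is $R_B$ itself, so $\we_P(B,B)=1$ and $S(B,B)$ is an empty sum; the recursion then gives $\cloudC{P}{B}{B}=\bx{0}{0}=1=\cloud{M|R_B}{R_B}$ and $\flockC{P}{B}{B}=\by{0}{0}=1=\flock{M/R_B}{\emptyset}$.

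For the inductive step fix $B<_P C$ and, for each $X\in B$ with $X\subseteq R_C$, set $N:=\minor{M}{R_C}{X}$. As $X$ and $R_C$ are cyclic flats, $N$ is again loopless and coloopless, and by Remark~\ref{remMinor} its lattice of cyclic flats is the interval $[X,R_C]_{\EF{M}}$ via $Z\mapsto Z-X$, with matching ranks; in particular $N$ has $|R_C|-|R_B|=n$ points and rank $\rank{M}(R_C)-\rank{M}(R_B)=r$. First I would use Lemma~\ref{lemmaCFminor} (together with the minor identity $(M/X)|(R_C-X)=N$) to identify the two summands on the right-hand sides as the cloud of the empty set and the flock of the ground set of $N$: $\cloud{M|R_C}{X}=\cloud{N}{\emptyset}$ and $\flock{M/X}{R_C-X}=\flock{N}{R_C-X}$. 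Applying Lemma~\ref{lemmaCFESGS} to $N$ then expands each of these, for every such $X$, as $\cloud{N}{\emptyset}=\bx{n}{r}-\dx\bigl[\sum_{W}\cloud{N}{W}\flock{N}{W}\bigr]$ and $\flock{N}{R_C-X}=\by{n}{r}-\dy\bigl[\sum_{W}\cloud{N}{W}\flock{N}{W}\bigr]$, where $W=Z-X$ ranges over the middle cyclic flats of $N$, i.e.\ over $Z\in(X,R_C)_{\EF{M}}$.

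Summing over the $\we_P(B,C)$ flats $X\in B$ with $X\subseteq R_C$ and using $\Z$-linearity of $\dx,\dy$ turns the right-hand sides into $\we_P(B,C)\bx{n}{r}-\dx(\Sigma)$ and $\we_P(B,C)\by{n}{r}-\dy(\Sigma)$ with $\Sigma:=\sum_X\sum_W\cloud{N}{W}\flock{N}{W}$, so by Definition~\ref{definitionCFCondensation} everything reduces to the single identity $\Sigma=S(B,C)$. Here Lemma~\ref{lemmaCFminor} gives $\cloud{N}{W}=\cloud{M|R_C}{Z}$ and $\flock{N}{W}=\flock{M|Z/X}{Z-X}=\flock{M/X}{Z-X}$ (again via $(M/X)|(Z-X)=M|Z/X$). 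I would then group the double sum by the orbit $D\in P$ of the middle flat $Z$; since $X\subsetneq Z\subsetneq R_C$ this forces $D\in(B,C)_P$, and the sum becomes $\Sigma=\sum_{D}\sum_{Z\in D,\,Z\subseteq R_C}\cloud{M|R_C}{Z}\bigl(\sum_{X\in B,\,X\subseteq Z}\flock{M/X}{Z-X}\bigr)$.

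I expect the inner sum to be the crux of the argument. For a fixed $Z\in D$ it is precisely the flock side of the lemma for the subinterval $[B,D]_P$, but with the arbitrary flat $Z$ in the role of the representative of $D$ instead of the designated $R_D$; this is exactly why I want the induction to range over all representative systems. The induction hypothesis for $[B,D]$, applied with $Z$ as representative, gives $\sum_{X\in B,\,X\subseteq Z}\flock{M/X}{Z-X}=\flockC{P}{B}{D}$, a quantity manifestly independent of $Z$ since it is built only from the condensed configuration. Pulling this factor out and then applying the cloud induction hypothesis for $[D,C]_P$ to $\sum_{Z\in D,\,Z\subseteq R_C}\cloud{M|R_C}{Z}=\cloudC{P}{D}{C}$ yields $\Sigma=\sum_{D}\cloudC{P}{D}{C}\flockC{P}{B}{D}=S(B,C)$, closing the induction for both identities at once; the representative-independence asserted in the statement then falls out a posteriori. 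The only remaining work is the routine verification of the minor identities and the bookkeeping of the parameters $n$ and $r$.
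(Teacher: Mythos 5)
Your proof is correct and takes essentially the same route as the paper's: induction on $|[B,C]_P|$, with Lemma \ref{lemmaCFminor} and Lemma \ref{lemmaCFESGS} applied to the minors $\minor{M}{R_C}{X}$, the same regrouping of the double sum over pairs $(X,Z)$, and the same use of representative-independence as part of the induction hypothesis (which the paper invokes as ``the rightmost sum is independent of the choice of $R_D$''). The only difference is the direction of the computation: you start from the right-hand side and reduce everything to the single identity $\Sigma=S(B,C)$, whereas the paper expands $S(B,C)$ from Definition \ref{definitionCFCondensation} and works toward the right-hand side.
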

\begin{proof}
 We proof this by induction on $|[B,C]_P|$.
 By definition
$$
   \cloudC{P}{B}{C}=\we_P(B,C)\bx{n}{r}-\dx(S(B,C))
$$
 where 
$$
 S(B,C)=\sum_{D}\cloudC{P}{D}{C}\flockC{P}{B}{D}
$$
 and $D$ ranges over $[B,C]_P - \{B,C\}$.

 If $[B,C]_P = \{B,C\}$ then $S(B,C)=0$ and $\minor{M}{R_C}{X} \cong \Unif{r}{n}$, for all $X \in B$ with $X\subseteq R_C$, and the claim follows.

 Otherwise, since for all $D\in [B,C]_P - \{B,C\}$, $|[D,C]_P|$ and $|[B,D]_P|$ is less than $|[B,C]_P|$, we can apply induction and get
\begin{align*}
 S(B,C)&=\sum_{D}\;\; \sum_{\mathclap{\substack{Y\in D\\ Y\subseteq R_C}}}\; \cloud{M|R_C}{Y}\;\sum_{\mathclap{\substack{X\in B\\ X\subseteq R_D}}}\; \flock{M/X}{R_D-X}\\
       &=\sum_{D}\;\; \sum_{\mathclap{\substack{Y\in D\\ Y\subseteq R_C}}}\; \bigl[\cloud{M|R_C}{Y}\;\sum_{\mathclap{\substack{X\in B\\ X\subseteq R_D}}}\; \flock{M/X}{R_D-X}\bigl].\\
 \intertext{By induction the rightmost sum is independent of the choice of $R_D$, so}
       &=\sum_{D}\;\; \sum_{\mathclap{\substack{Y\in D\\ Y\subseteq R_C}}}\; \bigl[\cloud{M|R_C}{Y}\;\sum_{\mathclap{\substack{X\in B\\ X\subseteq Y}}}\; \flock{M/X}{Y-X}\bigl]\\
       &=\sum_{D}\;\; \sum_{\mathclap{\substack{Y\in D\\ Y\subseteq R_C}}}\;\;\;\sum_{\mathclap{\substack{X\in B\\ X\subseteq Y}}}\; \cloud{M|R_C}{Y} \flock{M/X}{Y-X}.\\
 \intertext{The three sums range over $\{(X,Y)|Y\in \EF{M}, X\in B\text{ with }X\subsetneq Y\subsetneq R_C \}$ and can hence be rearranged to}
       &=\sum_{\mathclap{\substack{X\in B\\X\subseteq R_C}}}\;\; \sum_{Y} \cloud{M|R_C}{Y}\; \flock{M/X}{Y-X},
\end{align*} 
where $Y\in [X,R_C]_{\EF{M}}-\{X,R_C\}$. Applying this to $\cloudC{P}{B}{C}$ yields
\begin{align*}
   \cloudC{P}{B}{C}&=\we_P(B,C)\bx{n}{r}\\&\;\;\;\;\;\;\;\;\;\;\;\;\;\;-\dx\bigl(\;\sum_{\mathclap{\substack{X\in B\\X\subseteq R_C}}}\;\sum_{Y} \cloud{M|R_C}{Y}\flock{M/X}{Y-X}\bigr)\\
		  &=\sum_{\mathclap{\substack{X\in B\\X\subseteq R_C}}}\;\;\bigl[\bx{n}{r}-\dx\bigl(\;\sum_{Y} \cloud{M|R_C}{Y}\flock{M/X}{Y-X}\bigr)\bigr].\\
\intertext{By Lemma \ref{lemmaCFESGS} and the definition of $n$ and $r$ this is}
		  &=\sum_{\mathclap{\substack{X\in B\\X\subseteq R_C}}}\;\cloud{M|R_C}{X}.
\end{align*} 
The statement for the flock polynomial follows analogously.
\end{proof}
Those \emph{average} cloud and flock polynomials suffice for the computation of the rank generating polynomial and analogously to the cloud/flock formula we obtain:
\begin{lemma}\label{lemmaAlg2}
  \begin{align*}
  \rgpM{M}=\sum_{B\in P} \cloudC{P}{B}{1_P} \flockC{P}{0_P}{B},
 \end{align*}
 where $1_P=\{\EM\}$ and $0_P=\{\varnothing\}$.
\end{lemma}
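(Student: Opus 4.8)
The plan is to evaluate the two factors in the claimed product via Lemma \ref{lemmaCFCondensation} and then to recognise the outcome as the cloud/flock formula for $\rgpM{M}$. Since $1_P=\{\EM\}$ is the top and $0_P=\{\varnothing\}$ the bottom of $(P,\leq_P)$, we have $0_P\leq_P B\leq_P 1_P$ for every $B\in P$, so both average polynomials appearing in the statement are defined.

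First I would compute the cloud factor. The block $1_P$ has the single representative $R_{1_P}=\EM$, so $M|R_{1_P}=M$ and every $X\in B$ satisfies $X\subseteq\EM$; hence Lemma \ref{lemmaCFCondensation} gives
$$\cloudC{P}{B}{1_P}=\sum_{\substack{X\in B\\ X\subseteq\EM}}\cloud{M|\EM}{X}=\sum_{X\in B}\cloud{M}{X}.$$
Dually, the block $0_P$ has the single element $\varnothing$, and since $\varnothing\subseteq R_B$ trivially, applying Lemma \ref{lemmaCFCondensation} with $0_P$ in the first slot and $B$ in the second gives
$$\flockC{P}{0_P}{B}=\flock{M/\varnothing}{R_B-\varnothing}=\flock{M}{R_B}.$$

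Substituting these into the right-hand side of the claim yields
$$\sum_{B\in P}\cloudC{P}{B}{1_P}\,\flockC{P}{0_P}{B}=\sum_{B\in P}\Bigl(\sum_{X\in B}\cloud{M}{X}\Bigr)\flock{M}{R_B}.$$
The crucial point, which I expect to be the only real obstacle, is that $\flock{M}{R_B}$ is independent of the chosen representative $R_B\in B$; this is exactly the representative-independence recorded at the end of Lemma \ref{lemmaCFCondensation} (ultimately condition (2) in the definition of a condensation), and it amounts to the statement that $\flock{M}{X}=\flock{M}{R_B}$ for every $X\in B$. Granting this, I can move the flock factor inside the inner sum and rewrite each block's contribution as $\sum_{X\in B}\cloud{M}{X}\flock{M}{X}$. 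Since $P$ partitions $\EF{M}$, summing over $B\in P$ recombines these into $\sum_{X\in\EF{M}}\cloud{M}{X}\flock{M}{X}=\rgpM{M}$ by the cloud/flock formula, which finishes the argument. Everything apart from the constancy of $\flock{M}{X}$ on a block is direct substitution together with the trivial identifications $M|\EM=M$ and $M/\varnothing=M$.
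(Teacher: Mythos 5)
Your proof is correct and follows essentially the same route as the paper: both apply Lemma \ref{lemmaCFCondensation} to the two factors, invoke the representative-independence of $\flockC{P}{0_P}{B}$ to replace $\flock{M}{R_B}$ by $\flock{M}{X}$ inside each block, and then recombine over the partition via the cloud/flock formula.
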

\begin{proof}
By Lemma \ref{lemmaCFCondensation}
\begin{align*}
 \sum_{B\in P} \cloudC{P}{B}{1_P}& \flockC{P}{0_P}{B}\\
     &=\sum_{B\in P}\;\sum_{\mathclap{\substack{X\in B\\X \subseteq \EM}}} \cloud{M|\EM}{X} \sum_{\mathclap{\substack{Y\in 0_P\\Y \subseteq R_B}}} \flock{M/Y}{R_B-Y}.\\
\intertext{Using the independence of choice of $R_B \in B$ we get}
     &=\sum_{B\in P}\;\sum_{\mathclap{X\in B}}\;\;\;\sum_{\mathclap{\substack{Y\in 0_P\\Y \subseteq X}}}\; \cloud{M}{X}  \flock{M/Y}{X-Y}.\\
\intertext{And since $0_P=\{\emptyset\}$}
     &=\sum_{B\in P}\;\sum_{\mathclap{X\in B}} \cloud{M}{X} \flock{M}{X}\\
     &=\rgpM{M}.\qedhere
\end{align*}
\end{proof}
The condensed configuration is constructed to contain all necessary information for the computation of the $\cloudC{P}{B}{C}$ and $\flockC{P}{B}{C}$. This proves Theorem \ref{thmCondConfTutte}.

%

\section{Condensed Configurations of Perfect Matroid Designs}\label{secPMD}
$M$ is called a \emph{perfect matroid design} if all flats in $M$ of given rank $i$ have the same cardinality $k_i$. In \cite{PMD} Mphako showed that the rank generating polynomial of a perfect matroid design is already determined by the numbers $k_i$. We show how to recover a condensed configuration of a perfect matroid design by the numbers $k_i$ as well. Combined with Theorem \ref{thmCondConfTutte} this yields a new prove for Mphako's result.

Let for now $M$ be a perfect matroid design of rank $r$, $k_i$ the cardinality of a flat of rank $i$ and $B_i$ the set of flats of rank $i$. Notice that $B_i$ consists of cyclic flats iff $k_i>k_{i-1}+1$. By dualizing the first formula in the proof of Theorem 3.6 in \cite{Cameron} we obtain that for all $i\leq j$ and $Y\in B_j$
\begin{align*}\label{eqPMDw}|\{X \in B_i |  X\subseteq Y\}|=\prod_{h=0}^{i-1} \frac{k_j-k_h}{k_i-k_h}.\end{align*}
Hence $P:=\{B_i|k_i>k_{i-1}+1\}$ is a condensation of $\EF{M}$. The condensed configuration of $M$ corresponding to $P$ only depends on the numbers $k_i$ since the $\we_P(B_i,B_j)$ and  the cardinalities and ranks of the cyclic flats are determined by the $k_i$. 
Summarizing this yields a new proof for:
\begin{theorem}[Mphako \cite{PMD}]
The rank generating (\textsc{Tutte}) polynomial of perfect matroid design only depends on the cardinalities and ranks of its flats.
\end{theorem}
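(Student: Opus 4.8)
The plan is to deduce the theorem from Theorem \ref{thmCondConfTutte} by exhibiting a condensation of $\EF{M}$ whose condensed configuration is a function of the numbers $k_i$ alone. Since Theorem \ref{thmCondConfTutte} already guarantees that $\rgpM{M}$ can be computed from any condensed configuration, and since $\tutte{M}=\rgpMxy{M}{\x-1}{\y-1}$, once such a condensation is in place both the rank generating and the \textsc{Tutte} polynomial will be seen to depend only on the $k_i$.

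First I would pin down the cyclic flats of $M$. As $M$ is a perfect matroid design, every flat of rank $i$ has exactly $k_i$ points, so a rank-$i$ flat $X$ carries a coloop precisely when deleting one point drops the rank; comparing the cardinality of $X$ with that of the rank-$(i-1)$ flats it contains shows this happens exactly when $k_i=k_{i-1}+1$. Hence $B_i$ consists of cyclic flats iff $k_i>k_{i-1}+1$, and (using that $M$ is loopless and coloopless, so that $\emptyset$ and $\EM$ are cyclic) $\EF{M}$ is the disjoint union of these blocks. Grouping cyclic flats by rank gives the candidate partition $P:=\{B_i\mid k_i>k_{i-1}+1\}$.

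Next I would check that $P$ is a condensation. Condition~1 of the definition is immediate, since every member of $B_i$ has cardinality $k_i$ and rank $i$. Condition~2 asks that $\we_P(B_i,B_j)=|\{X\in B_i\mid X\subseteq Y\}|$ be independent of the chosen $Y\in B_j$, and this is where the design structure is used. I would count ordered independent $i$-tuples inside a rank-$j$ flat $Y$: there are $\prod_{h=0}^{i-1}(k_j-k_h)$ of them, since at the $m$-th step one avoids a rank-$(m-1)$ flat of $k_{m-1}$ points, while each single rank-$i$ flat contains exactly $\prod_{h=0}^{i-1}(k_i-k_h)$ such tuples. As every independent $i$-tuple in $Y$ spans a unique rank-$i$ flat contained in $Y$, this yields
$$\we_P(B_i,B_j)=\prod_{h=0}^{i-1}\frac{k_j-k_h}{k_i-k_h},$$
which depends only on the $k_h$ and, crucially, not on $Y$ (equivalently, one dualizes Theorem~3.6 of \cite{Cameron}).

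The main obstacle is exactly this counting identity together with its independence of the representative $Y$: it is the one place where being a perfect matroid design, rather than an arbitrary matroid, is essential, and everything else is bookkeeping. Once it is established, the condensed configuration corresponding to $P$ — the matrix $(\we_P(B_i,B_j))_{i,j}$ together with the cardinalities $k_i$ and ranks $i$ of the representatives — is manifestly determined by the numbers $k_i$. Theorem \ref{thmCondConfTutte} then computes $\rgpM{M}$ from this data, and the substitution $\x\mapsto\x-1$, $\y\mapsto\y-1$ recovers $\tutte{M}$, so that both polynomials depend only on the cardinalities and ranks of the flats of $M$.
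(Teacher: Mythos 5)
Your proposal is correct and follows essentially the same route as the paper: take $P=\{B_i\mid k_i>k_{i-1}+1\}$, verify it is a condensation with $\we_P(B_i,B_j)=\prod_{h=0}^{i-1}\frac{k_j-k_h}{k_i-k_h}$, and invoke Theorem \ref{thmCondConfTutte}. The only difference is cosmetic: you prove the counting identity directly by double-counting ordered independent $i$-tuples, where the paper cites the dual of a formula from \cite{Cameron}.
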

Notice that we actually proved a stronger statement, since we can moreover compute the average cloud and flock polynomials $\cloudC{P}{B_i}{B_j}$ and $\flockC{P}{B_i}{B_j}$ now. This yields a new method to prove the nonexistence of certain perfect matroid designs. Firstly the coefficients of all average cloud and flock polynomials have to be positive integers. Secondly cardinality and rank of all flats which have to appear in the matroid can be determined by the exponents of the average cloud polynomials and may not differ from the $k_i$.
\bibliographystyle{plain} 
\bibliography{Bibliography} 

\begin{thebibliography}{1}

\bibitem{Bonin}
Joseph~E. Bonin and Anna de~Mier.
\newblock The lattice of cyclic flats of a matroid.
\newblock {\em Ann. Comb.}, 12(2):155--170, 2008.

\bibitem{Cameron}
Peter~J. Cameron.
\newblock Polynomial aspects of codes, matroids and permutation groups, 2002.

\bibitem{TutteIdent}
W.~Kook, V.~Reiner, and D.~Stanton.
\newblock A convolution formula for the {T}utte polynomial.
\newblock {\em J. Combin. Theory Ser. B}, 76(2):297--300, 1999.

\bibitem{PMD}
Eunice~Gogo Mphako.
\newblock Tutte polynomials of perfect matroid designs.
\newblock {\em Combin. Probab. Comput.}, 9(4):363--367, 2000.

\bibitem{Plesken2}
Wilhelm Plesken.
\newblock Counting with groups and rings.
\newblock {\em J. Reine Angew. Math.}, 334:40--68, 1982.

\bibitem{Plesken}
Wilhelm Plesken and Thomas Bächler.
\newblock Counting polynomials for linear codes, hyperplane arrangements, and
  matroids.
\newblock {\em Documenta Mathematica}, 19:285--312, 2014.

\bibitem{LargeFamTutte}
Ken Shoda.
\newblock {\em Large Families of Matroids with the Same Tutte Polynomial}.
\newblock PhD thesis, The George Washington University, August 2012.

\end{thebibliography}
\end{document}